\documentclass[12pt]{amsart}

%% Modif. March 12, 2013
%$ Send comments to publ@impan.pl

% Using pdflatex is preferred

\usepackage{amssymb}

%% Optional, but useful:
\usepackage{enumerate}

%% Add only when there are figures:
\usepackage{graphicx}

\usepackage{amsmath}
\usepackage{amsfonts}
\usepackage{amsmath}
\usepackage{amsthm}
\usepackage{amssymb}
\usepackage{amscd}
\usepackage{mathrsfs}
\usepackage{latexsym}
\usepackage{times}
\usepackage{fancyhdr}
\usepackage{epsfig}
\usepackage[all]{xy}
\usepackage{xypic}
\usepackage[english]{babel}
\usepackage{tkz-graph}

\usepackage{times}
\usepackage{fancyhdr}
\usepackage{epsfig}
\usepackage[all]{xy}
\usepackage[english]{babel}
\usepackage{leftidx}
\usepackage{enumerate}
\usepackage{graphicx}
\usepackage{epigraph}

\usepackage{xypic}
\usepackage[english]{babel}
\usepackage{tkz-graph}
\usepackage{longtable}
\usepackage{supertabular}

\usepackage{helvet}
\usepackage{textcomp}

\usepackage{float}
\usepackage{longtable,booktabs}

\usepackage{pifont}
\usepackage{tikz}

\usepackage{tikz}
\usepackage{etoolbox}
\usepackage{enumitem}
\usepackage{enumitem}
\usepackage{color}

\usepackage{calc}
%%
%% The tikz package is used for doing the actual drawing.
\usepackage{tikz}
%%
%% In order to be able to put arrowheads in the middle of directed edges, we need an extra library.
\usetikzlibrary{decorations.markings}
%%
%% The next line says how the "vertex" style of nodes should look: drawn as small circles.
\tikzstyle{vertex}=[circle, draw, inner sep=0pt, minimum size=6pt]
%%
%% Next, we make a \vertex command as a shorthand in place of \node[vertex} to get that style.

%%
%% Finally, we declare a "counter", which is what LaTeX calls an integer variable, for use in
%% the calculations of angles for evenly spacing vertices in circular arrangements.

\makeatletter
\@namedef{subjclassname@2010}{%
  \textup{2010} Mathematics Subject Classification}
\makeatother

%% If you are using letters of the Polish alphabet, add \usepackage[T1]{fontenc}
%% E.g. the name "Zoladz" is then coded \.Zo{\l}\k{a}d\'z

%% Numbered objects of "theorem" style (text italicized).
%% Below, the optional parameters indicate that all objects are numbered together, and "by section"
%% However, you are welcome to use any other numbering system of your choice, as well as your own abbreviations.

\newtheorem{thm}{Theorem}[section]

\newtheorem{lem}[thm]{Lemma}
\newtheorem{prop}[thm]{Proposition}

%% A numbered theorem with a fancy name:

%% Numbered objects of "non-theorem" style (text roman):

\theoremstyle{definition}

%% An unnumbered remark:

%% Equations numbered by section (optional):

\numberwithin{equation}{section}

%%%%%%%%%%% For IMPAN journals:

\frenchspacing

\textwidth=17cm
\textheight=23cm
\parindent=16pt
\oddsidemargin=-0.5cm
\evensidemargin=-0.5cm
\topmargin=-0.5cm

%%%%%%%%%%%%%%%%%%%%%%%%%%%%%%%%%%%
%%%%%%%%%%%%%%%%%%%%%%%%%%%%%%%%%%%

%%%% Put your macros here:

%\def\qed{\hfill{$\bigcirc$}}

\def\Gon{\text{Gon}}

\newcommand{\hooklongrightarrow}{\lhook\joinrel\longrightarrow}

%%%%%%%%%%%%%

\begin{document}

%%%%% To ease editing, for IMPAN journals add:

\baselineskip=17pt

%%%%%%%%%%%

%% In the running head, replace first names by initials
%% and give an abbreviation of the title.

\title[Small cyclic torsion]{On the small cyclic torsion of elliptic curves over cubic number fields.}

\author[J. Wang]{Jian Wang}
%\address{Department of Mathematics\\ University of Southern California\\
%Los Angeles, CA 90089, USA}
%\email{wang50@usc.edu}

\address{College of Mathematics\\ Jilin Normal University\\
Siping, Jilin 136000, China}
\email{blandye@gmail.com}

\date{\today}

\begin{abstract}
Merel's result on the strong uniform boundedness conjecture made it meaningful to classify the torsion part of the Mordell-Weil groups of all elliptic curves defined over number fields of fixed degree $d$. In this paper, we discuss the cyclic torsion subgroup of elliptic curves over cubic number fields. For $N=49,40,25$ or $22$, we show that $\mathbb{Z}/N\mathbb{Z}$ is not a subgroup of $E(K)_{tor}$ for any elliptic curve $E$ over a cubic number field $K$.
\end{abstract}

\subjclass[2010]{11G05,11G18}

\keywords{torsion subgroup, elliptic curves, modular curves}

\maketitle

\section{Introduction}

In 1996, Merel \cite{Merel} finally proved the strong uniform boundedness conjecture for elliptic curves over number fields.

\begin{thm}[Merel]
For every positive integer $d$, there exists an integer $B_d$ such that for every number field $K$ of degree $d$ and every elliptic curve $E$ over $K$, we always have $$|E(K)_{tor}|\leq B_d$$
\end{thm}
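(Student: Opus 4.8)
The plan is to follow the strategy of Mazur and Kamienny, with the decisive new input being an explicit study of Hecke operators on modular homology. The first move is a reduction. To bound $|E(K)_{tor}|$ uniformly over all elliptic curves $E$ and all number fields $K$ of degree $d$, it suffices, for each fixed $d$, to bound the primes $p$ for which some such $E(K)$ contains a point of exact order $p$, together with a parallel bound on the prime powers $p^n$ that can occur. Indeed, $E(K)_{tor}=\bigoplus_p E(K)[p^\infty]$, each summand has rank at most $2$, so once one controls which primes occur and to which power, the total order is controlled. I would therefore concentrate on the core statement: for each $d$ there is a $P_d$ such that no elliptic curve over a number field of degree $d$ carries a point of prime order $p>P_d$.

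A point of order $p$ on $E$ over $K$ with $[K:\mathbb{Q}]=d$ yields a $K$-rational point of the modular curve $X_1(p)$, hence a closed point of degree at most $d$, that is, an effective divisor $D$ of degree $d$ on $X_1(p)/\mathbb{Q}$. Composing the Abel--Jacobi map with the subtraction of the cuspidal divisor $d\cdot\infty$, I would invoke Kamienny's criterion: if the induced morphism from the symmetric power $X_1(p)^{(d)}$ to a suitable quotient $J$ of the Jacobian $J_1(p)$ on which the Mordell--Weil group is finite is a formal immersion at the image of the cusp, after reduction modulo a well-chosen small prime $\ell$, then no such $D$ can exist unless it is supported at the cusps, which is impossible once $p$ is large. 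The quotient to use is Mazur's winding quotient $J_e$; its Mordell--Weil rank is $0$ because its $L$-value at $s=1$ is nonzero, so by the theorem of Kolyvagin--Logachev the group $J_e(\mathbb{Q})$ is finite, which is exactly what the criterion requires.

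The formal immersion hypothesis unwinds into a linear-algebra statement, and this is the heart of the matter and the main obstacle. Writing $e\in H_1(X_1(p),\mathbb{Q})$ for the winding element, the class of the geodesic path from $0$ to $\infty$, one must show that the Hecke translates $T_1 e,\,T_2 e,\,\dots,\,T_d e$ remain linearly independent after reduction modulo $\ell$, and crucially that this holds \emph{uniformly} in $p$. I would attack this by expressing $T_n e$ explicitly through Manin symbols and extracting a closed combinatorial formula for the relevant pairings; the independence then reduces to the nonvanishing of an explicit matrix of such pairings, valid once $p$ exceeds an explicit bound depending on $d$, with $\ell$ chosen small relative to $p$ but large enough to avoid degenerate reductions. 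It is precisely the uniform control of this combinatorics, rather than a prime-by-prime verification, that converts Kamienny's method into a statement valid for all $p$ at once.

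Finally I would assemble the bound $B_d$. The argument above eliminates large prime torsion; for prime powers $p^n$ one runs the same Jacobian and formal-immersion machinery on $X_1(p^n)$, where the genus grows rapidly with $n$, to bound the admissible exponents. The finitely many small primes and the low-degree exceptional configurations, where the symmetric product may genuinely carry degree-$d$ points, for instance via the covering $X_1\to X_0$ or through hyperelliptic-type families, are treated by direct case analysis. Packaging the prime bound, the prime-power bound, and the finitely many exceptions produces an explicit, if very large, $B_d$, completing the proof.
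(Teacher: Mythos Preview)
The paper does not prove this statement. Merel's theorem appears in the Introduction purely as a cited background result, attributed to \cite{Merel}, with no proof or sketch offered; the paper's own contributions begin only with Theorem~\ref{T1}. So there is no ``paper's own proof'' to compare your proposal against.

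That said, your sketch is a reasonable high-level outline of Merel's actual argument: the reduction to bounding prime torsion, the passage to degree-$d$ points on $X_1(p)$ (Merel in fact works primarily with $X_0(p)$), the use of the winding quotient and Kolyvagin--Logach\"ev to secure a rank-zero target for the Abel--Jacobi map, Kamienny's formal-immersion criterion, and the decisive combinatorial input that $T_1e,\dots,T_de$ are linearly independent for $p$ large. One clarification: the ``parallel bound on prime powers'' is not really a separate argument of the same depth. Once the set of primes $p$ that can occur is finite, bounding the exponent for each such $p$ is comparatively routine (gonality growth of $X_1(p^n)$, or earlier results of Manin and Kamienny--Mazur handle this), so the hard content is entirely in the prime bound.
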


Merel's result made it meaningful to classify the torsion part of the Mordell-Weil groups of all elliptic curves defined over number fields of fixed degree $d$. The case $d=1$ was solved by Mazur \cite{Mazur} and Kubert \cite{Kubert}. The case $d=2$ was solved by Kamienny \cite{Kamienny1}, Kenku and Momose \cite{KenkuMomose}.

In \cite{Wang}, we proposed the aim of restricting the size of cyclic torsion subgroup of elliptic curves over cubic number fields. In that paper, six out of twenty four composite integers were ruled out with the help of the Kamienny's criterion. In this paper, we discuss several cases in the list of 24 which are too small to apply Kamienny's criterion.

The main result of this paper is the following:
\begin{thm}\label{T1} If $N=49,40,25$ or $22$, then $\mathbb{Z}/N\mathbb{Z}$ is not a subgroup of $E(K)_{tor}$ for any elliptic curve $E$ over a cubic number field $K$.
\end{thm}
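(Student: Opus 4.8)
The plan is to transfer the question to modular curves: if $E/K$ carries a point of order $N$ with $[K:\mathbb{Q}]=3$, then $(E,P)$ with $P$ of exact order $N$ determines a non-cuspidal $K$-rational point $x$ of $X_1(N)$, hence a non-cuspidal closed point of $X_1(N)_{\mathbb{Q}}$ of degree dividing $3$; it cannot have degree $1$, since that would yield an elliptic curve $E/\mathbb{Q}$ with $\mathbb{Z}/N\mathbb{Z}\subseteq E(\mathbb{Q})_{\mathrm{tor}}$, contradicting Mazur's theorem. So $x$ has degree exactly $3$, and the goal becomes: show $X_1(N)$ has no non-cuspidal closed point of degree $3$. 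For $N=22$ and $N=25$ the curves $X_1(22)$ and $X_1(25)$ (of genus $6$ and $12$) are small enough to work with directly. For $N=40$ and $N=49$, where $X_1(N)$ has prohibitively large genus, one instead pushes $x$ forward along $X_1(N)\to X_\Delta(N)$ for an intermediate curve with $\Gamma_1(N)\subseteq\Gamma_\Delta(N)\subsetneq\Gamma_0(N)$ of manageable genus, chosen so that $X_\Delta(N)$ still dominates $X_0(N)$; since neither $X_0(40)$ nor $X_0(49)$ has a non-cuspidal rational point (these $N$ are not among the $N$ admitting a rational cyclic $N$-isogeny), the image of $x$ cannot degenerate to a cusp or drop to degree $1$, so it stays a non-cuspidal degree-$3$ point.

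Let $C$ be the curve selected above, with Jacobian $J$. First one shows $J(\mathbb{Q})$ is finite: decompose $J$ up to isogeny into the $\mathbb{Q}$-simple abelian varieties attached to Galois orbits of weight-$2$ newforms of the relevant level and nebentypus, and check that each has nonvanishing central $L$-value; by the theorems of Kato and of Kolyvagin--Logachev this forces Mordell--Weil rank $0$. Next one invokes the gonality of $C$: these modular curves (and the chosen quotients) have $\overline{\mathbb{Q}}$-gonality at least $4$, so every degree-$3$ line bundle $L$ satisfies $h^0(L)\le 1$, whence distinct effective divisors of degree $3$ have distinct classes in $\mathrm{Pic}^3$. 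Since $C$ has a rational cusp, $\mathrm{Pic}^3(C)(\mathbb{Q})\cong J(\mathbb{Q})$ is finite, so there are only finitely many effective $\mathbb{Q}$-rational divisors of degree $3$ on $C$; one lists them from an explicit model of $C$, determines which are irreducible (the genuine cubic points), and verifies that each such divisor is supported on cusps --- contradicting the non-cuspidality of $x$. (If some $C$ turns out to be trigonal instead of having gonality $\ge 4$, one uses that a trigonal curve of genus $\ge 5$ carries a unique $g^1_3$, so all but finitely many of its cubic points are fibres of one fixed degree-$3$ map to $\mathbb{P}^1$, whose moduli interpretation can then be analyzed directly; and if the Riemann--Roch count leaves finitely many unresolved classes, one eliminates them with a Mordell--Weil sieve or with Kamienny's formal-immersion criterion at an auxiliary prime $p\nmid N$.)

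The delicate point is $N=40$ and, above all, $N=49$: one must choose the intermediate curve $X_\Delta(N)$ so that simultaneously (i) it still detects a genuine order-$N$ point rather than merely an $N$-isogeny, (ii) its genus is small enough to write down and compute with an explicit model, (iii) its Jacobian is provably of rank $0$, and (iv) its gonality is at least $4$ (or it has a single, well-understood low-degree pencil so that its cubic points can still be enumerated). Reconciling all four requirements at once --- together with the rational-point analysis of $X_0(N)$ needed to forbid a drop to degree $1$, and the bookkeeping of cusps and elliptic points under $X_1(N)\to X_\Delta(N)$ --- is where the real work lies; the rank-$0$ verifications of the second step, though not conceptually hard, must in addition be carried out for every newform orbit that actually occurs.
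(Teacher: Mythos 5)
Your proposal is a genuinely different route from the paper's, but as written it is a program rather than a proof, and the decisive steps are left unexecuted. The core of your argument --- ``one lists them from an explicit model of $C$, determines which are irreducible\dots and verifies that each such divisor is supported on cusps'' --- is exactly the content that would constitute the proof, and you defer it entirely; for $N=40$ and $49$ you do not even identify the intermediate curve $X_\Delta(N)$ on which this is to be done, and you acknowledge that reconciling your four requirements ``is where the real work lies.'' Two further points need care even to set the program up: (i) finiteness of $J(\mathbb{Q})$ is not enough to enumerate the rational effective degree-$3$ divisors --- you need the actual group $J(\mathbb{Q})$ (e.g.\ that it is generated by differences of cusps), which is a separate and nontrivial verification; (ii) a non-cuspidal cubic point of $X_1(N)$ pushes forward to a closed point of $X_\Delta(N)$ of degree $1$ or $3$, and while your appeal to the nonexistence of rational cyclic $40$- or $49$-isogenies handles the degree-$1$ case for those two levels, note that this safety net would fail for $N=25$ (where $X_0(25)$ does have non-cuspidal rational points), so the choice of which curve to work on is delicate. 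In short, there is a genuine gap: the argument stops precisely where the contradiction would have to be produced.

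For comparison, the paper uses the same two inputs you invoke --- finiteness of $J_1(N)(\mathbb{Q})$ (via nonvanishing $L$-values and Kato) and $\Gon(X_1(N))>3$ (Ishii--Momose, Jeon--Kim--Schweizer) --- but only to prove a reduction statement, not to enumerate cubic points. Namely, for a prime $\wp$ of $K$ above $p=3$, additive reduction is excluded by the order of the component group, and multiplicative reduction is excluded because it would force the degree-$3$ divisor $x_1+x_2+x_3$ to coincide with a sum of cusps: Frey's injectivity of $X_1(N)^{(3)}\to J_1(N)$ on $\mathbb{Q}(\zeta_N)$-points, Katz's specialization lemma (applicable since $3\nmid N$ so $3$ is unramified in $\mathbb{Q}(\zeta_N)$), and Manin--Drinfeld combine to show the two divisors are equal, contradicting non-cuspidality. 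Hence $E$ has good reduction at $\wp$, the order-$N$ point injects into $\widetilde{E}(k)$ with $|k|\le 27$, and Hasse's bound $|\widetilde{E}(k)|\le(1+\sqrt{27})^2\approx 38.4$ kills $N=49,40$ outright, while for $N=25,22$ one is forced into $|\widetilde{E}(\mathbb{F}_{27})|=N$, i.e.\ traces $t=3$ resp.\ $t=6$, which Waterhouse's classification excludes. This avoids all explicit models, all divisor enumeration, and the entire intermediate-curve apparatus; it is worth internalizing as the standard way these ``small $N$'' cases are handled.
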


\section{Preliminaries}

In this section, we omit the background materials which were covered in section 2 of \cite{Wang}. Readers who are interested may switch there.

Let $N$ be a positive integer. Let $X_1(N)$ (resp. $X_0(N)$) be the modular curve defined over $\mathbb{Q}$ associated to the congruence subgroup $\Gamma_1(N)$ (resp. $\Gamma_0(N)$). We denote by $Y_1(N)=X_1(N)\backslash\{cusps\}$,
$Y_0(N)=X_0(N)\backslash\{cusps\}$ the corresponding affine curves. Denote by $J_1(N)$ (resp. $J_0(N)$) the jacobian of $X_1(N)$ (resp. $X_0(N)$).

For a modular curve $X$, let $X^{(d)}$ be the $d$-th symmetric power of $X$. Define $$\Phi:X^{(d)}\longrightarrow J_X$$
by $\Phi(P_1+\cdots+P_d)=[P_1+\cdots+P_d-d\infty]$ where $J_X$ is the jacobian of $X$, and $[~~]$ denotes the divisor class. Let $Gon(X)$ denote the gonality of $X$. The following generalization of proposition 1(i) in Frey \cite{Frey}, which was proved in \cite{Wang}, is also necessary in section \ref{method}.

\begin{lem}[Frey]\label{Frey} Assume that $Gon(X)>d$ and $K$ is a finite extension of $\mathbb{Q}$. Then $\Phi_{|X^{(d)}(K)}$ is injective.
\end{lem}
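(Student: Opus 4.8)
The proof I have in mind is short, so let me describe it and then isolate the one delicate point. The plan is to unwind the definition of $\Phi$ and then play the linear equivalence against the gonality bound. Suppose $D_1,D_2\in X^{(d)}(K)$ are effective $K$-rational divisors of degree $d$ with $\Phi(D_1)=\Phi(D_2)$. By definition this says $[D_1-d\infty]=[D_2-d\infty]$ in $J_X$, i.e. $D_1-D_2$ is principal, so $D_1$ and $D_2$ are linearly equivalent. Assume for contradiction $D_1\neq D_2$. Then the complete linear system $|D_2|$ has positive dimension; it is defined over $K$, and $D_1$ is a $K$-rational member of it, hence corresponds to a $\text{Gal}(\bar K/K)$-fixed point of $\mathbb{P}(L(D_2))$, which is a $K$-rational point and therefore lifts to a function $f\in K(X)^{*}$ with $\div(f)=D_1-D_2$ and $f\notin K^{*}$ (equivalently, one applies Hilbert 90 to the cocycle $\sigma\mapsto f^{\sigma}/f\in\bar K^{*}$).

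Next I would bound $\deg f$ for $f$ regarded as a morphism $X\to\mathbb{P}^1$. Since $\div(f)+D_2=D_1\geq 0$, the polar divisor of $f$ is dominated by $D_2$, so $\deg f=\deg(\text{polar divisor of }f)\leq\deg D_2=d$; any common components of $D_1$ and $D_2$ only make the polar divisor smaller, so the bound is unaffected. On the other hand $\deg f\geq 1$ because $f$ is non-constant. Thus $f$ exhibits a morphism $X\to\mathbb{P}^1$ of degree at most $d$, forcing $Gon(X)\leq d$ and contradicting the hypothesis $Gon(X)>d$. Hence $D_1=D_2$, which is precisely the injectivity of $\Phi_{|X^{(d)}(K)}$.

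The one point needing care is the interplay between the field of definition of $f$ and the meaning of $Gon(X)$: the argument manufactures a morphism to $\mathbb{P}^1$ defined over $K$, so it directly contradicts $Gon(X)>d$ when $Gon$ denotes gonality over $K$, and a fortiori when it denotes the geometric gonality (which is no larger, and for which one may even skip the descent step and work with $f\in\bar K(X)$). In the application relevant to Theorem~\ref{T1} one has $K=\mathbb{Q}$ and $X=X_1(N)$, with $D_1,D_2$ the Galois orbits of cubic points giving elements of $X_1(N)^{(3)}(\mathbb{Q})$, so no descent subtlety arises at all: a $\mathbb{Q}$-rational member of $|D_2|$ furnishes $f\in\mathbb{Q}(X)$ immediately. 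This is the promised extension of Proposition~1(i) of Frey~\cite{Frey}; the same argument appears in \cite{Wang}.
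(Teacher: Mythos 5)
Your argument is correct and is essentially the standard one: the paper does not reprove this lemma (it defers to \cite{Wang}), but the proof given there, following Frey's Proposition 1(i), is exactly your argument --- $\Phi(D_1)=\Phi(D_2)$ forces $D_1\sim D_2$, and if $D_1\neq D_2$ the resulting non-constant function has polar divisor dominated by $D_2$, hence degree at most $d$, contradicting $Gon(X)>d$. Your extra care about descending $f$ to $K$ via Hilbert 90 is sound but dispensable here, since the gonality used in Propositions \ref{IshiiMomose} and \ref{JeonKimSchweizer} is the geometric one, for which $f\in\bar K(X)$ already suffices.
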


In this paper, we are interested in the gonality of the modular curves $X_1(N)$.  Since the 1-gonal curves are precisely the curves of genus 0, then $X_1(N)$ is 1-gonal if and only if $N$ is among the eleven values $N=1-10,12$ with genus 0. The complete lists of 2-gonal and 3-gonal ones were determined by Ishii-Momose \cite{IshiiMomose} and Jeon-Kim-Schweizer \cite{JeonKimSchweizer}.

\begin{prop}[Ishii-Momose]\label{IshiiMomose} The modular curve $X_1(N)$ is 2-gonal if and only if $N$ is one of the following:
$$\aligned N&=1-10,12   &(g=0);\\
N&=11,14,15   &(g=1);\\
N&=13,16,18   &(g=2).\endaligned$$
\end{prop}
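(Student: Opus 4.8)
This is a classification statement, so the natural plan is in two stages: first bound $N$, then check the finitely many remaining values. For the bound I would invoke Abramovich's linear lower bound for the $\mathbb{C}$-gonality of a modular curve, $\operatorname{Gon}_{\mathbb{C}}(X_1(N)) \geq c\,[\operatorname{PSL}_2(\mathbb{Z}) : \overline{\Gamma_1(N)}]$ for an explicit constant $c$ (coming from the Li--Yau inequality together with a Selberg-type lower bound for the first Laplacian eigenvalue on the corresponding modular surface $\Gamma_1(N)\backslash\mathcal{H}$, or one of its subsequent improvements). Since the index $[\operatorname{PSL}_2(\mathbb{Z}):\overline{\Gamma_1(N)}]$ grows like a positive constant times $N^2$, this forces $\operatorname{Gon}_{\mathbb{C}}(X_1(N)) > 2$ for every $N$ exceeding some explicit, moderately sized $N_0$, reducing the problem to a finite computation.

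For the values with $g(X_1(N)) \leq 2$ the conclusion is immediate. Computing $g(X_1(N))$ from the standard genus formula, one finds $g=0$ precisely for $N \in \{1,\dots,10,12\}$, $g=1$ precisely for $N \in \{11,14,15\}$, and $g=2$ precisely for $N \in \{13,16,18\}$. A curve of genus $0$ is $1$-gonal, a curve of genus $1$ is $2$-gonal, and every curve of genus $2$ is hyperelliptic and hence $2$-gonal; so all of the listed $N$ indeed occur. The remaining task is the converse: for each $N \leq N_0$ with $g(X_1(N)) \geq 3$, show that $X_1(N)$ is not hyperelliptic.

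For these finitely many $N$ I would work with the canonical map. A basis of $H^0(X_1(N),\Omega^1)$ is furnished by the $q$-expansions of the weight-$2$ cusp forms for $\Gamma_1(N)$ (the newforms of the relevant levels together with their oldform translates), so the canonical image of $X_1(N)$ can be written down explicitly, and one checks for each $N$ that the space of quadrics vanishing on it has the dimension $\binom{g-2}{2}$ predicted by Petri's theorem for a non-hyperelliptic curve, rather than the larger value $\binom{g-1}{2}$ forced in the hyperelliptic case (equivalently, that the canonical image is not a rational normal curve). Several of the cases can be disposed of more cheaply: applying the Castelnuovo--Severi inequality to a putative degree-$2$ map $X_1(N) \to \mathbb{P}^1$ together with a low-degree natural projection $X_1(N) \to X_1(M)$ (or $X_1(N) \to X_0(M)$) onto a modular curve of positive genus gives $g(X_1(N)) \leq d\,(g(X_1(M))+1) - 1$ unless the two maps share a common factor; when $N$ admits such an $M$ with $d$ small and coprime to $2$ (e.g. $N = 22$, $M = 11$, $d = 3$), this bound is violated and non-hyperellipticity follows at once.

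The main obstacle is not conceptual but a matter of making the finite part genuinely rigorous and keeping it manageable. The Abramovich bound is weakest exactly in the range of medium $N$ just above the true cut-off, so one may need a sharper gonality estimate, or a direct argument, to clear the last handful of levels; and the canonical-model computations, although routine in principle, must be carried out carefully enough --- ideally symbolically over $\mathbb{Q}$ --- that the vanishing or non-vanishing of the relevant spaces of quadrics is certain rather than merely numerically plausible. Levels such as $N = 20$, whose only modular-curve quotients of the form $X_1(M)$ have genus $0$, force one to rely on this direct canonical-map computation rather than on any Castelnuovo--Severi shortcut.
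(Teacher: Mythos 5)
The paper gives no proof of this proposition --- it is quoted as a known theorem of Ishii--Momose --- so the only meaningful comparison is with their original argument, and your route is genuinely different from it. Your outline is correct and would work: the genus values and the ``if'' direction are right (every listed $N$ has a rational cusp, so the genus-$0$ curves are honestly $\mathbb{P}^1$, and genus $1$ and $2$ yield degree-$2$ maps from $|2\infty|$ and the canonical series); Abramovich's bound $\operatorname{Gon}_{\mathbb{C}}(X_1(N))\geq\frac{\lambda_1}{24}\,[\operatorname{PSL}_2(\mathbb{Z}):\overline{\Gamma}_1(N)]$ with Selberg's $\lambda_1\geq 3/16$ settles every level whose index exceeds $256$, leaving only $N=17,19,20,21,22,24,26$ among the levels of genus $\geq 3$ to check by hand; your Castelnuovo--Severi computation for $N=22$ is correct ($d=3$, $g(X_1(11))=1$ forces $g\leq 5<6$); and comparing the rank of the multiplication map $\mathrm{Sym}^2S_2(\Gamma_1(N))\to S_4(\Gamma_1(N))$ against $3g-3$ versus $2g-1$ is a finite, certifiable $q$-expansion computation that disposes of the remaining levels, including $N=20$. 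The one anachronism is that Abramovich's spectral bound (1996) postdates Ishii--Momose (1991); their actual proof instead exploits the fact that a hyperelliptic involution is central in the automorphism group, hence compatible with the diamond operators and the quotients $X_\Delta(N)$ and $X_0(N)$, and combines this with Ogg's classification of hyperelliptic $X_0(N)$ and Ogg-style point counts on reductions modulo small primes (a hyperelliptic curve over $\mathbb{F}_q$ has at most $2q+2$ rational points, a bound the cusps and supersingular points can violate). Their method is more elementary and essentially computation-free; yours is more uniform, automates readily, and --- because the spectral bound controls gonality of every order --- is precisely the template that extends to the trigonal classification of Proposition \ref{JeonKimSchweizer}.
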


\begin{prop}[Jeon-Kim-Schweizer]\label{JeonKimSchweizer} The modular curve $X_1(N)$ is 3-gonal if and only if $N$ is one of the following:
$$\aligned N&=1-10,12   &(g=0);\\
N&=11,14,15   &(g=1);\\
N&=13,16,18   &(g=2);\\
N&=20   &(g=3).\endaligned$$
\end{prop}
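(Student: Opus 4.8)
The statement is an ``if and only if'' about which $X_1(N)$ satisfy $\gamma(X_1(N))\le 3$, where I write $\gamma$ for gonality and read ``$d$-gonal'' as ``gonality at most $d$'', consistent with the fact that the lists in Propositions \ref{IshiiMomose} and \ref{JeonKimSchweizer} are nested. The sufficiency direction is almost immediate: the gonality of any curve of genus $g$ is at most the Brill--Noether value $\lfloor (g+3)/2\rfloor$, so every curve with $g\le 3$ is automatically $3$-gonal. Concretely, genus $0$ is rational, genus $1$ and $2$ are elliptic or hyperelliptic (hence $2$-gonal), and a genus $3$ curve is either hyperelliptic or a smooth plane quartic, the latter being \emph{trigonal} via projection from a rational point of the quartic. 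I would therefore first compute $g(X_1(N))$ from the standard formula $g=1+\mu/12-\nu_\infty/2$, with $\mu=[\mathrm{SL}_2(\mathbb{Z}):\Gamma_1(N)]$ and $\nu_\infty$ the number of cusps (no elliptic points occur for $N\ge 4$), check that the values with $g\le 3$ are exactly those in the stated list, and conclude. The only case invoking the genus-$3$ trigonality fact is $N=20$, which is non-hyperelliptic because it is absent from the Ishii--Momose list; projection from a rational cusp gives a degree-$3$ map defined over $\mathbb{Q}$.

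The necessity direction is the real content: for every $N$ outside the list I must show $\gamma(X_1(N))\ge 4$. I would first dispose of all large $N$ with Abramovich's lower bound, which says that the geometric gonality of a modular curve $X_\Gamma$ is at least a fixed positive constant times the index $[\mathrm{PSL}_2(\mathbb{Z}):\bar\Gamma]$ (the constant coming from the Selberg/Luo--Rudnick--Sarnak bound on the first Laplace eigenvalue). Since $[\mathrm{SL}_2(\mathbb{Z}):\Gamma_1(N)]$ grows like $N^2$ and the gonality over $\mathbb{Q}$ dominates the geometric one, this forces $\gamma(X_1(N))>3$ for all $N$ beyond an explicit threshold $N_0$, leaving only finitely many $N\le N_0$ to treat by hand.

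For the intermediate values (the non-listed $N\le N_0$, beginning with $N=17$ and $N=19$, then $N\ge 21$), hyperellipticity is already excluded by Proposition \ref{IshiiMomose}, so it remains only to rule out trigonal pencils. I would attack these with two tools. First, reduction modulo a well-chosen prime $p\nmid N$: a degree-$3$ map over $\mathbb{Q}$ specializes to one over $\mathbb{F}_p$, whence $\#X_1(N)(\mathbb{F}_p)\le 3(p+1)$; counting the $\mathbb{F}_p$-points (elliptic curves over $\mathbb{F}_p$ with a rational point of order $N$, together with the rational cusps) and exhibiting a prime that violates this inequality forces $\gamma(X_1(N))\ge 4$. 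Second, the Castelnuovo--Severi inequality applied to a hypothetical $g^1_3$ together with a natural map $X_1(N)\to X_0(N)$ or $X_1(N)\to X_1(M)$ for $M\mid N$, of known genus and degree, to contradict the computed value of $g(X_1(N))$.

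The main obstacle, I expect, is exactly this intermediate band. Abramovich's constant is small, so $N_0$ is uncomfortably large, while the point-counting bound only bites once $X_1(N)$ has more than $3(p+1)$ points for some small good prime --- which can fail for the smallest non-listed values, such as $N=17$, where the genus is modest and rational points are scarce. For those stubborn cases I would fall back on an explicit canonical model of $X_1(N)$ and a Maroni/Clifford-index analysis, showing that the canonical curve lies on no rational normal scroll supporting a $g^1_3$, so that no trigonal pencil can exist. Finding arguments --- a single prime, or one Castelnuovo--Severi comparison --- that cover the whole band rather than requiring a separate ad hoc computation for each $N$ is the delicate part of the proof.
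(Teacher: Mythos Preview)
The paper does not supply its own proof of this proposition: it is quoted as a known result of Jeon, Kim, and Schweizer, with a bare citation to \cite{JeonKimSchweizer}. There is therefore no argument in the present paper against which to compare yours.

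That said, your outline is essentially the strategy of the cited reference. Sufficiency is indeed immediate from the genus, with $N=20$ handled as a non-hyperelliptic genus-$3$ curve (plane quartic, projection from a rational cusp). For necessity, Jeon--Kim--Schweizer likewise combine an Abramovich-type index lower bound to kill all large $N$, and then treat the remaining small $N$ by Castelnuovo--Severi applied to the natural covers $X_1(N)\to X_1(M)$ and $X_1(N)\to X_0(N)$, supplemented by point-counts over small finite fields. Your diagnosis that the intermediate band (starting at $N=17,19,21,\dots$) is where the work lies is accurate; in practice those cases are closed by Castelnuovo--Severi against the quotient maps rather than by an explicit analysis of the canonical model, so your fallback to Maroni/scroll geometry, while valid in principle, is heavier machinery than the original proof needs.
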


Any noncuspidal point of $X_1(N)$ is represented by $(E,\pm P)$, where $E$ is an elliptic curve and $P\in E$ is a point of order $N$. Any noncuspidal point of $X_0(N)$ is represented by $(E, C)$, where $E$ is an elliptic curve and $C\subset E$ is a cyclic subgroup of order $N$. The map $\pi: X_1(N)\longrightarrow X_0(N)$ sends $(E,\pm P)$ to $(E,\langle P\rangle)$, where $\langle P\rangle$ is the cyclic subgroup generated by $P$.

Let $p$ be a prime such that $p\nmid N$. Igusa's theorem \cite{Igusa} says that the modular curves $X_1(N)$ and $X_0(N)$ have good reduction at prime $p$. The following theorem of Serre and Milne says that reducing the modular curve is compatible with reducing the modular interpretation.

\begin{thm}[Serre-Milne]\cite[Theorem 1]{Ogg75} Any point of $Y_1(N)$ or $Y_0(N)$, rational over a field $K$ (of characteristic not dividing $N$), is represented by a $K$-rational pair (i.e. $E$ is defined over $K$, and $P$ is rational over $K$, or $C$ is a group rational over $K$), and conversely.
\end{thm}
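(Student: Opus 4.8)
The statement has two directions, and the plan is to dispose of the converse (a $K$-rational pair yields a $K$-rational point) first, since it is the easy one. Here I would simply invoke functoriality: the assignment sending a pair $(E,\pm P)$ (resp. $(E,C)$) to the point it represents is induced by a morphism of moduli problems defined over $\mathbb{Q}$, hence commutes with the action of $\Gal(\overline{K}/K)$; thus a pair defined over $K$ produces a point fixed by $\Gal(\overline{K}/K)$, i.e.\ a $K$-rational point. All the content lies in the forward direction, which I would establish by Galois descent. Throughout, the hypothesis that $\mathrm{char}(K)$ does not divide $N$ is used to guarantee that the $N$-torsion is \'etale, so that over $\overline{K}$ the moduli interpretation is the naive one.

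Write $G_K=\Gal(\overline{K}/K)$ and let $y\in Y_1(N)(K)$. Composing with the forgetful map $Y_1(N)\to Y(1)=\mathbb{A}^1_j$, which is defined over $\mathbb{Q}$, shows $j:=j(y)\in K$. I would first treat the generic case $j\neq 0,1728$. Fix any Weierstrass model $E$ over $K$ with this $j$-invariant; over $\overline{K}$ the moduli interpretation of $y$ furnishes a point $P\in E(\overline{K})$ of exact order $N$ with $(E,\pm P)$ representing $y$. The key computation is to exploit $G_K$-invariance of $y$: for $\sigma\in G_K$ we have $E^\sigma=E$ and $(E,\pm P^\sigma)$ again represents $y=y^\sigma$, so there is $\alpha\in\Aut_{\overline{K}}(E)=\{\pm1\}$ with $\alpha(P^\sigma)\in\{P,-P\}$, forcing $P^\sigma=\epsilon(\sigma)P$ for a sign $\epsilon(\sigma)\in\{\pm1\}$. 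A short check of the cocycle relation shows $\epsilon\colon G_K\to\{\pm1\}$ is a continuous homomorphism, i.e.\ a quadratic character.

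It then remains to kill $\epsilon$ by a twist. Let $E^{(d)}$ be the quadratic twist of $E$ attached to $\epsilon$, with twisting isomorphism $\psi\colon E\xrightarrow{\sim}E^{(d)}$ over $\overline{K}$ satisfying $\psi^\sigma=\epsilon(\sigma)\psi$. Then $P':=\psi(P)$ satisfies $(P')^\sigma=\psi^\sigma(P^\sigma)=\epsilon(\sigma)\psi(\epsilon(\sigma)P)=\psi(P)=P'$, so $P'\in E^{(d)}(K)$ and $(E^{(d)},P')$ is the desired $K$-rational pair. For $Y_0(N)$ the argument is even simpler: with $E/K$ as above and $C\subset E(\overline{K})$ the cyclic subgroup representing $y$, the same invariance gives $\alpha\in\{\pm1\}$ with $\alpha(C^\sigma)=C$; since $-1$ preserves every subgroup this yields $C^\sigma=C$ for all $\sigma$, so $C$ is already defined over $K$ and no twist is needed.

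The hard part is the exceptional locus $j\in\{0,1728\}$, where $\Aut_{\overline{K}}(E)$ is the cyclic group $\mu_4$ or $\mu_6$ rather than $\{\pm1\}$; here the relation $P^\sigma=\alpha_\sigma P$ produces a cocycle valued in this larger automorphism group, and the descent must be carried out using $H^1(G_K,\mu_n)$ in place of the sign character. I expect this to be the only delicate point: one must verify that the resulting class is effective, that is, that the appropriate quartic or sextic twist of $E$ genuinely carries $P$ (resp.\ $C$) to a $K$-rational object. Since these automorphism groups are cyclic, Kummer theory (Hilbert 90) guarantees that every such cocycle comes from a twist, so no $H^2$-obstruction arises; and as there are only finitely many such $y$ for each $N$, one may alternatively treat them by hand. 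Assembling the generic case with these finitely many exceptional points completes the forward direction, and together with the easy converse this proves the theorem.
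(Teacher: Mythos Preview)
The paper does not give a proof of this statement: it is quoted as a black-box result from Ogg's 1975 Bulletin article (where it is attributed to Serre and Milne), and is used only as an input to later arguments. So there is no ``paper's own proof'' to compare against.

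That said, your Galois-descent outline is the standard route to this result and is essentially correct. The generic-$j$ case is clean and complete as written. For the exceptional values $j\in\{0,1728\}$ your sketch is right in spirit but one point deserves care: the automorphism $\alpha_\sigma\in\Aut_{\overline K}(E)$ carrying $P^\sigma$ to $\pm P$ is only well defined modulo the stabilizer of $\{P,-P\}$ in $\Aut(E)$, so you need that stabilizer to be exactly $\{\pm1\}$ in order to get an honest cocycle in $\Aut(E)/\{\pm1\}\cong\mu_2$ or $\mu_3$. A short computation in $\mathbb{Z}[i]$ or $\mathbb{Z}[\zeta_3]$ shows this holds whenever $N\geq4$ (e.g.\ $\zeta_3 P=\pm P$ forces $P$ to lie in a kernel of norm $3$ or $1$), which is the only range relevant to the paper. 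With that check in hand, Hilbert~90 for $\mu_n$ produces the required quartic or sextic twist, and your argument closes. The fallback ``treat them by hand'' is unnecessary.
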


Let $K$ be a number field with ring of integers $\mathcal{O}_K$, $\wp\subset\mathcal{O}_K$ a prime ideal lying above $p$, $k=\mathbb{F}_q=\mathcal{O}_K/\wp$ its residue field. Let $E$ be an elliptic curve over $K$ and $P\in E(K)$ a point of order $N$. Let $\widetilde{E}$ be the fibre over $k$ of the N\'{e}ron model of $E$, and let $\widetilde{P}\in\widetilde{E}(k)$ be the reduction of $P$. The following well-known but rarely mentioned theorem \cite[Proposition 2.5]{Wang} shows that $\widetilde{P}$ has order $N$ when $p\nmid N$.

\begin{thm}\label{torsionreduction} Let $m$ be a positive integer relatively prime to $char(k)$. Then the reduction map
$$E(K)[m]\longrightarrow\widetilde{E}(k)$$
is injective.
\end{thm}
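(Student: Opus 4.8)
The plan is to identify the kernel of the reduction map with the group of points of a one-dimensional formal group and then to exploit the fact that a formal group over a ring of residue characteristic $p$ carries no nontrivial torsion of order prime to $p$. First I would pass to a local situation. Since $E(K)[m]$ injects into $E(K_\wp)[m]$, where $K_\wp$ denotes the completion of $K$ at $\wp$, and the reduction map factors through $E(K_\wp)$ (the residue field being the same $k$), it suffices to prove injectivity after replacing $K$ by $K_\wp$. Thus I may assume the base is a complete discretely valued field with valuation ring $R=\mathcal{O}_{K_\wp}$, maximal ideal $\mathfrak{m}$, and residue field $k$ of characteristic $p=\char(k)>0$, the hypothesis being that $m$ is prime to $p$. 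Let $\widetilde{E}$ be the special fibre of the N\'{e}ron model of $E$ over $R$ and set $E_1=\Ker\big(E(K_\wp)\to\widetilde{E}(k)\big)$. Completing the N\'{e}ron model along its identity section yields a one-dimensional formal group $\hat{E}$ over $R$, and the standard parametrization by the local coordinate $z=-x/y$ gives a group isomorphism $E_1\cong\hat{E}(\mathfrak{m})$. The theorem is therefore equivalent to the assertion that $\hat{E}(\mathfrak{m})$ has no nonzero element killed by $m$.

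To prove this I would use the $\mathfrak{m}$-adic filtration $\hat{E}(\mathfrak{m})\supset\hat{E}(\mathfrak{m}^2)\supset\cdots$, where $\hat{E}(\mathfrak{m}^n)$ consists of the points whose parameter lies in $\mathfrak{m}^n$. Suppose $P\in\hat{E}(\mathfrak{m})$ is nonzero with $[m]P=0$, and let $n\geq 1$ be the exact valuation of its parameter, so $z(P)\in\mathfrak{m}^n\setminus\mathfrak{m}^{n+1}$. The multiplication-by-$m$ endomorphism of $\hat{E}$ is given by a power series of the form $[m](T)=mT+(\text{higher order terms})$; since $z(P)^j\in\mathfrak{m}^{jn}\subseteq\mathfrak{m}^{n+1}$ for $j\geq 2$, this gives
$$z([m]P)\equiv m\,z(P)\pmod{\mathfrak{m}^{n+1}}.$$
Because $m$ is prime to $p$, it is a unit in $R$, whence $m\,z(P)\in\mathfrak{m}^n\setminus\mathfrak{m}^{n+1}$ and in particular $z([m]P)\neq 0$. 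This contradicts $[m]P=0$, so $\hat{E}(\mathfrak{m})$ has no $m$-torsion, and the reduction map is injective on $E(K)[m]$ as claimed.

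The substance of the argument lies in its two structural inputs: the identification $E_1\cong\hat{E}(\mathfrak{m})$ and the precise shape $[m](T)=mT+\cdots$ of the multiplication map, both of which rest on the theory of the formal group of an elliptic curve (and, in the case of potentially bad reduction, on using the formal completion of the N\'{e}ron model rather than of a global Weierstrass model). Once these are in place, the coprimality of $m$ and $p$ does the remaining work: it forces $m$ to act invertibly on each graded quotient $\hat{E}(\mathfrak{m}^n)/\hat{E}(\mathfrak{m}^{n+1})$, which is exactly why no prime-to-$p$ torsion can survive in the kernel of reduction. I expect the formal-group estimate $[m](T)=mT+\cdots$ to be the step requiring the most care to justify cleanly, as it is the hinge on which the whole valuation argument turns.
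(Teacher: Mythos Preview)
Your argument is the standard formal-group proof and is correct. Note, however, that the paper does not actually supply a proof of this theorem: it is stated as ``well-known but rarely mentioned'' and is simply cited from \cite[Proposition 2.5]{Wang}. So there is nothing in the present paper to compare your proof against; you have filled in what the author deliberately left as a black box.

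For completeness, a couple of minor remarks on your write-up. The identification $E_1\cong\hat{E}(\mathfrak{m})$ in the generality needed here (where $\widetilde{E}$ is the special fibre of the N\'{e}ron model, not necessarily an elliptic curve) is exactly what you say: one takes the formal completion of the N\'{e}ron model along its identity section, and the smoothness of the N\'{e}ron model guarantees this is a one-dimensional commutative formal group over $R$. Your valuation argument then goes through verbatim. The step you flagged as the hinge, $[m](T)=mT+(\text{higher order terms})$, is a general fact about commutative formal group laws (an easy induction on $m$ from $F(X,Y)=X+Y+(\text{degree}\geq 2)$) and needs no special input from the geometry of $E$.
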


Let $k=\mathbb{F}_q$ be the finite field with $q=p^n$ elements. Let $E/k$ be an elliptic curve over $k$. Let $|E(k)|$ be the number of points of $E$ over $k$. Then Hasse's theorem states that
$$||E(k)|-q-1|\leq2\sqrt{q}$$
i.e.
$$(1-\sqrt{p^n})^2\leq|E(k)|\leq(1+\sqrt{p^n})^2$$
Let $t=q+1-|E(k)|$, $E$ is called ordinary if $(t,q)=1$, otherwise it is called supersingular. In the range proposed by Hasse's theorem, all the ordinary $t$ appear, while the supersingular $t$ only appears in restricted case. This was determined by Waterhouse \cite[Theorem 4.1]{Waterhouse}:

\begin{prop}[Waterhouse] \label{Waterhouse} The isogeny classes of elliptic curves over $k$ are in one-to-one correspondence with the rational integers $t$ having $|t|\leq2\sqrt{q}$ and satisfying one of the following conditions:
\newline\indent (1) $(t,p)=1$;
\newline\indent (2) If $n$ is even: $t=\pm2\sqrt{q}$;
\newline\indent (3) If $n$ is even and $p\not\equiv1\mod3$: $t=\pm\sqrt{q}$;
\newline\indent (4) If $n$ is odd and $p=2$ or $3$: $t=\pm p^{\frac{n+1}{2}}$;
\newline\indent (5) If either (i) $n$ is odd or (ii) $n$ is even and $p\not\equiv1\mod 4$: $t=0$.
\end{prop}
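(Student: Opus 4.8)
The plan is to derive the classification from the two structural theorems governing abelian varieties over finite fields. Tate's isogeny theorem asserts that two elliptic curves over $k=\mathbb{F}_q$ are isogenous if and only if their Frobenius endomorphisms share the same characteristic polynomial, and for an elliptic curve that polynomial is $f(x)=x^{2}-tx+q$ with $t=q+1-|E(k)|$. Thus the isogeny classes are in bijection with the integers $t$ that genuinely arise as a trace of Frobenius, and the entire statement reduces to pinning down this set of admissible $t$.

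That the admissible $t$ satisfy $|t|\le 2\sqrt q$ is the Hasse bound already recorded above: the roots $\pi,\bar\pi$ of $f$ are $q$-Weil numbers. The converse---that prescribing such a $t$ actually produces an elliptic curve---is the content of the Honda--Tate theorem, which attaches to every $q$-Weil number $\pi$ a simple abelian variety $A/k$ with Frobenius $\pi$, unique up to isogeny, and I would isolate those $\pi$ for which $\dim A=1$. The decisive tool is the Honda--Tate dimension formula
\[
2\dim A=[F:\mathbb{Q}]\cdot e,\qquad F=\mathbb{Q}(\pi),
\]
where $e$ is the least common denominator of the local invariants $\mathrm{inv}_v\bigl(\End^{0}(A)\bigr)\in\mathbb{Q}/\mathbb{Z}$; these vanish for $v\nmid p\infty$, equal $\tfrac12$ at a real place of $F$ (which exists exactly when $\pi$ is real), and at a place $v\mid p$ are given by $\tfrac{v(\pi)}{v(q)}[F_v:\mathbb{Q}_p]\bmod 1$.

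I would then split on the discriminant of $f$. If $t^{2}=4q$, then $q$ is a square, $n$ is even, and $\pi=\pm\sqrt q\in\mathbb{Z}$, so $F=\mathbb{Q}$ and $\dim A=1$ requires $e=2$; the invariants at $p$ and at $\infty$ are both $\tfrac12$, the corresponding quaternion algebra ramified exactly at $\{p,\infty\}$ exists, and one obtains condition (2). If instead $t^{2}<4q$ then $F$ is imaginary quadratic, $[F:\mathbb{Q}]=2$, and one needs $e=1$, i.e.\ the $p$-adic invariants integral. When $(t,p)=1$ the relation $\pi\bar\pi=q$ forces $p$ to split in $F$ with $v(\pi)=0$ on one factor, the invariants vanish, and an elliptic curve always exists (the ordinary case, condition (1)). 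The degenerate value $t=0$ gives $\pi=\sqrt{-q}$, whose survival is dictated by the behaviour of $p$ in $\mathbb{Q}(i)$ (when $n$ is even) or in $\mathbb{Q}(\sqrt{-p})$ (when $n$ is odd), producing exactly the alternatives of condition (5). Finally the remaining possibilities $p\mid t$, $t\ne 0$ compatible with $|t|\le 2\sqrt q$ are severely constrained: $t=\pm\sqrt q$ (with $n$ even, so $F=\mathbb{Q}(\sqrt{-3})$) survives precisely when $p\not\equiv 1\pmod 3$, giving (3); and $t=\pm p^{(n+1)/2}$, which meets the Hasse bound only for $p\in\{2,3\}$ and requires $n$ odd for integrality of the exponent, gives (4).

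The main obstacle is the last paragraph's bookkeeping: carrying out the local-invariant computation at the primes above $p$ in each arithmetic subcase and verifying that $e=1$ (respectively $e=2$) holds on precisely the list (1)--(5) and nowhere else. Concretely this means tracking the valuation $v(\pi)$ together with the splitting type of $p$---split, inert, or ramified---in the quadratic fields $\mathbb{Q}(i)$, $\mathbb{Q}(\sqrt{-2})$, $\mathbb{Q}(\sqrt{-3})$, and $\mathbb{Q}(\sqrt{-p})$, and I would organize the verification by the parity of $n$ and the residues of $p$ modulo $3$ and $4$ so that the surviving cases match the stated conditions exactly.
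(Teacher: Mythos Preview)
The paper does not prove this proposition; it is quoted as Theorem~4.1 of Waterhouse and used only as input in \S\ref{proof}. There is therefore no in-paper argument to compare your proposal against.

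That said, your plan via Tate's isogeny theorem and the Honda--Tate dimension formula is the standard route and is essentially Waterhouse's own. One point deserves care in the execution. In your last paragraph you write that ``the remaining possibilities $p\mid t$, $t\ne 0$'' are $t=\pm\sqrt{q}$ and $t=\pm p^{(n+1)/2}$; read literally this is false, since for large $n$ there are many integers $t$ with $p\mid t$ and $0<|t|<2\sqrt{q}$. What you must show is that for every \emph{other} such $t$ the prime $p$ splits in $F=\mathbb{Q}(\sqrt{t^{2}-4q})$, so that the invariant $v(\pi)/v(q)$ at one factor lies strictly between $0$ and $1$ and hence $e\ge 2$. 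A clean way to arrange this is to compute $v_p(t^{2}-4q)$: when $2v_p(t)<n$ this valuation is even and the cofactor is congruent to a nonzero square mod $p$, forcing $p$ to split; the complementary range $2v_p(t)\ge n$, combined with the Hasse bound $|t|\le 2p^{n/2}$, then pins $t$ down to exactly the values in (2)--(5). Your closing remarks show you anticipate this bookkeeping, but make sure the write-up explicitly handles the exclusion of the non-listed traces, not only the inclusion of the listed ones.
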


%% Note that in the example below, the braces { } around \cite are necessary (due to nested optional parameters)

\section{Method}
\label{method}

The following Theorem states that the jacobian $J_1(N)$ decomposes to a direct sum of modular abelian varieties.

\begin{thm}\cite[Theorem 6.6.6]{DiamondShurman}\label{decomposition} The jacobian $J_1(N)$ is isogenous to a direct sum of abelian varieties (over $\mathbb{Q}$) associated to equavalence classes of newforms
$$J_1(N)\longrightarrow\bigoplus_fA_f^{m_f}$$
with $f(\tau)=\sum_{n=1}^\infty a_n(f)e^{2\pi in\tau}$ newforms of divisor level.
\end{thm}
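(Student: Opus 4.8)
The plan is to reconstruct the argument of \cite[Theorem 6.6.6]{DiamondShurman}: one combines the Eichler--Shimura description of $J_1(N)$ with Atkin--Lehner--Li newform theory, and then uses semisimplicity of the Hecke algebra over $\mathbb{Q}$ together with Poincar\'e reducibility to cut $J_1(N)$ into pieces up to isogeny. First I would record the standard facts. By Eichler--Shimura the cotangent space of $J_1(N)$ at the origin is the space of weight-$2$ cusp forms $S_2(\Gamma_1(N))$, and $J_1(N)(\mathbb{C})\cong S_2(\Gamma_1(N))^{\vee}/H_1(X_1(N),\mathbb{Z})$. The Hecke operators $T_n$ and the diamond operators $\langle d\rangle$ are realized by correspondences on $X_1(N)$ defined over $\mathbb{Q}$, hence act on $J_1(N)$ by $\mathbb{Q}$-rational endomorphisms, compatibly with their usual action on $S_2(\Gamma_1(N))$.

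The next step is newform theory. For each divisor $M\mid N$ let $S_2(\Gamma_1(M))^{\mathrm{new}}$ be the new subspace, and let $\alpha_d\colon f(\tau)\mapsto f(d\tau)$ be the degeneracy (oldform-raising) maps. Atkin--Lehner--Li gives the decomposition
\[
S_2(\Gamma_1(N))\;=\;\bigoplus_{M\mid N}\ \bigoplus_{d\mid N/M}\ \alpha_d\bigl(S_2(\Gamma_1(M))^{\mathrm{new}}\bigr),
\]
equivariant for the anemic Hecke algebra. Grouping newforms into $\mathrm{Gal}(\overline{\mathbb{Q}}/\mathbb{Q})$-orbits and writing $V_f$ for the span of the conjugates of a newform $f$ of level $M_f\mid N$, this becomes an isomorphism of modules over the anemic Hecke algebra
\[
S_2(\Gamma_1(N))\;\cong\;\bigoplus_f V_f^{\,m_f},\qquad m_f=\#\{\,d:\ d\mid N/M_f\,\},
\]
the sum over orbit representatives $f$ of divisor level.

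Now I would transport this to the Jacobian. Let $\mathbb{T}_{\mathbb{Q}}$ be the $\mathbb{Q}$-subalgebra of $\End_{\mathbb{Q}}(J_1(N))\otimes\mathbb{Q}$ generated by the $T_n$ and $\langle d\rangle$; since it acts faithfully on the semisimple module above, $\mathbb{T}_{\mathbb{Q}}\cong\prod_f K_f$ with $K_f=\mathbb{Q}(a_n(f))$, and there are orthogonal idempotents $e_f$ summing to $1$. Clearing denominators, $n_f e_f$ lies in the integral Hecke ring and defines an actual endomorphism of $J_1(N)$; its image $B_f$ is an abelian subvariety, and since $\sum e_f=1$, Poincar\'e reducibility yields $J_1(N)\sim\bigoplus_f B_f$. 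To identify $B_f$, note that $f$ already cuts out of $J_1(M_f)$ an abelian variety $A_f$ of dimension $[K_f:\mathbb{Q}]$, and the $m_f=\sigma_0(N/M_f)$ degeneracy maps $J_1(M_f)\to J_1(N)$ (one per $d\mid N/M_f$) show $B_f\sim A_f^{m_f}$. Everything is $\mathrm{Gal}(\overline{\mathbb{Q}}/\mathbb{Q})$-stable because Galois only permutes conjugates inside a single orbit, so each $A_f$ and the whole decomposition descend to $\mathbb{Q}$.

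I expect the main obstacle to be the bookkeeping that produces the \emph{exact} multiplicities $m_f$ and the rationality of the factors: one must use the injectivity and commutation properties of the degeneracy maps $\alpha_d$ (the technical core of Atkin--Lehner--Li) to see that the oldclasses $\alpha_d(V_f)$ for distinct $d\mid N/M_f$ are linearly independent and mutually Hecke-isomorphic, so that they assemble into a clean factor $A_f^{m_f}$ rather than merely something isogenous to a subquotient; and one must check that passing from $e_f\in\mathbb{T}_{\mathbb{Q}}$ to genuine endomorphisms by clearing denominators does not disturb the $\mathbb{Q}$-structure. Given Eichler--Shimura and Poincar\'e reducibility, the remaining steps are formal.
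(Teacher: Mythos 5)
The paper offers no proof of this statement; it is imported verbatim as \cite[Theorem 6.6.6]{DiamondShurman}, so there is nothing internal to compare against, and your sketch follows the same standard route used in that reference (Eichler--Shimura, the Atkin--Lehner--Li old/new decomposition of $S_2(\Gamma_1(N))$, degeneracy maps into $J_1(N)$, and Poincar\'e reducibility). The one point to tighten is that the isomorphism $\mathbb{T}_{\mathbb{Q}}\cong\prod_f K_f$ and the resulting orthogonal idempotents $e_f$ require the \emph{anemic} Hecke algebra (generated by $T_n$ and $\langle n\rangle$ with $\gcd(n,N)=1$): for $p\mid N/M_f$ the operator $T_p$ does not act through $K_f$ on $\bigoplus_{d\mid N/M_f}\alpha_d(V_f)$ but mixes the copies, so the full Hecke algebra is strictly larger than $\prod_f K_f$; since you already invoke only anemic equivariance when decomposing $S_2(\Gamma_1(N))$, this is a wording correction rather than a genuine gap.
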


The $L$-series $L(A_f,s)$ of $A_f$ coincides, up to a finite number of Euler factors, with the product
$$\prod_\sigma L(f^\sigma,s)=\prod_\sigma\sum_{n=1}^\infty a_n^\sigma n^{-s}$$
where $\sigma$ runs through embeddings $\sigma:K_f\hooklongrightarrow\mathbb{C}$ with $K_f=\mathbb{Q}(\{a_n\})$ the number field of $f$ (See \cite[\S 7.5]{Shimura}). The following proposition is a special case of Corollary 14.3 in Kato \cite{Kato}:

\begin{prop} Let $A$ be an abelian variety over $\mathbb{Q}$ such that there is a surjective homomorphism $J_1(N)\longrightarrow A$ for some $N\geq1$. If $L(A,1)\neq0$, then $A(\mathbb{Q})$ is finite.
\end{prop}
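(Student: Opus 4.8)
The plan is to reduce the statement to Kato's Euler system bound for a single newform, using the isogeny decomposition of $J_1(N)$ recorded in Theorem \ref{decomposition}. First I would write $J_1(N)\sim\bigoplus_f A_f^{m_f}$ as in that theorem, where the $A_f$ are the mutually non-isogenous $\mathbb{Q}$-simple newform factors. Because $A$ is a quotient of $J_1(N)$ by a homomorphism defined over $\mathbb{Q}$, and the category of abelian varieties up to isogeny is semisimple (Poincar\'e reducibility), $A$ is isogenous over $\mathbb{Q}$ to a product $\prod_{f\in S}A_f^{n_f}$ for some finite set $S$ of Galois-conjugacy classes of newforms and multiplicities $1\leq n_f\leq m_f$. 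Since the Mordell-Weil rank over a number field is an isogeny invariant, it suffices to prove that each $A_f$ with $f\in S$ has finite Mordell-Weil group.

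Next I would use the $L$-series factorization recalled just before the statement. Isogenous abelian varieties have the same $L$-function, and $L(A_f,s)$ agrees up to finitely many Euler factors with $\prod_\sigma L(f^\sigma,s)$, where $\sigma$ runs over the embeddings $K_f\hookrightarrow\mathbb{C}$. Hence $L(A,s)$ agrees up to finitely many Euler factors with $\prod_{f\in S}\prod_\sigma L(f^\sigma,s)^{n_f}$. Each of the finitely many differing Euler factors is finite and nonzero at $s=1$, so the hypothesis $L(A,1)\neq 0$ forces $L(f^\sigma,1)\neq 0$ for every $f\in S$ and every $\sigma$; in particular $L(f,1)\neq 0$ for each $f\in S$.

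The main step is then to invoke Kato's theorem (Corollary 14.3 of \cite{Kato}): for a weight $2$ newform $f$ with $L(f,1)\neq 0$, the Beilinson-Kato Euler system bounds the Selmer group of $A_f$, giving that the Selmer group of $A_f$ over $\mathbb{Q}$ is finite and hence that $A_f(\mathbb{Q})$ and the Shafarevich-Tate group are finite. Applying this to each $f\in S$ and combining with the isogeny invariance from the first paragraph yields that $A(\mathbb{Q})$ is finite.

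The genuine depth of the result is entirely contained in the cited input: the construction of the zeta elements in $K_2$ of the modular curves, the explicit reciprocity law relating them to the special value $L(f,1)$, and the resulting bound on the Selmer group. Since we are permitted to quote this as the already-established Corollary 14.3, the only nontrivial work on our side is bookkeeping, and the step I expect to require the most care is making precise the passage from ``$A$ is a quotient of $J_1(N)$'' to ``$A$ is isogenous to a product of newform quotients,'' together with verifying that the nonvanishing hypothesis genuinely transfers to every simple isogeny factor through the $L$-function factorization.
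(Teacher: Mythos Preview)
The paper does not actually prove this proposition: it is introduced with the sentence ``The following proposition is a special case of Corollary 14.3 in Kato \cite{Kato}'' and is then quoted as a black box. In Kato's paper, Corollary~14.3 is already formulated at the level of an arbitrary abelian-variety quotient $A$ of $J_1(N)$, not merely for a single newform factor $A_f$; so from the paper's point of view there is nothing to reduce.

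Your argument is mathematically sound, but it does more than the paper: you are essentially re-deriving Kato's Corollary~14.3 from the newform-level statement (Kato's Theorem~14.2) by decomposing $A$ up to isogeny into newform pieces, transferring the nonvanishing hypothesis to each $L(f^\sigma,1)$, and then invoking the Euler-system bound factor by factor. That reduction is exactly what Kato packages into Corollary~14.3, so the two approaches coincide once one recognizes that the corollary already covers general quotients. One small point of care in your write-up: the assertion that the differing Euler factors are nonzero at $s=1$ deserves a word of justification (they are reciprocals of polynomials in $p^{-s}$ with roots on $|s|=p^{-1/2}$ by Weil, hence nonvanishing at $s=1$), but this is routine.
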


The decomposition of $J_1(N)$ and the non-vanishing of the $L$-series at $s=1$ of modular abelian varieties can be calculated in Magma \cite{Magma}. If $L(A_f,1)\neq0$ for all $A_f$, then we know $A_f(\mathbb{Q})$ is finite for all $A_f$, therefore $J_1(N)_{/\mathbb{Q}}$ is finite. For the $N\leq65$ in the list in \cite{Wang}, Table \ref{jacobian} is the result of calculations in Magma. The second column $t$ is the number of non-isogenous modular abelian varieties in the decompositon $J_1(N)=\bigoplus_{i=1}^tA_i^{m_i}$. The third column list the dimension $d_i$ and multiplicity $m_i$ of each $A_i$ (we omit $m_i$ if $m_i=1$). The fourth column verifies vanishing of $L$-series at $1$ ($T$ means $L(A_i,1)=0$ and $F$ means $L(A_i,1)\neq0$). It is easy to see $J_1(N)(\mathbb{Q})$ is finite for the $N$'s in Table \ref{jacobian} except $N=65,63$.

\begin{table}[!ht]
\tabcolsep 0pt
\vspace*{0pt}
\begin{center}
\def\temptablewidth{0.8\textwidth}
\setlength{\abovecaptionskip}{0pt}
\setlength{\belowcaptionskip}{-5pt}
\caption{Decompostion of $J_1(N)$}
\label{jacobian}
{\rule{\temptablewidth}{1pt}}
\begin{tabular*}{\temptablewidth}{@{\extracolsep{\fill}}ccccccccccccc}
~~~~$N$~~~~&$t$&$d_i(m_i)$&$L(A_i,1)=0$\\\hline
$49$&$5$&$1,48,6,12,2$&$F,F,F,F,F$\\
$25$&$2$&$8,4$&$F,F$\\
$27$&$2$&$1,12$&$F,F$\\
$32$&$4$&$1,4,8,2(2)$&$F,F,F,F$\\
$65$&$19$&$1,2,2,6,20,20,8,2(2),8,$&$T,F,F,F,F,F,F,F,F,$\\
&&$2,8,2,8,4,4,12,6,2,2$&$F,F,F,F,F,F,F,F,F,F$\\
$39$&$10$&$1,2,4,8,2,2(2),4,2,4,2$&$F,F,F,F,F,F,F,F,F,F$\\
$26$&$5$&$1,1,2(2),2,2$&$F,F,F,F,F$\\
$55$&$10$&$1,2,1(2),4,32,8,8,16,4,4$&$F,F,F,F,F,F,F,F,F,F$\\
$33$&$6$&$1,1(2),8,4,4,2$&$F,F,F,F,F,F$\\
$22$&$2$&$1(2),4$&$F,F$\\
$35$&$8$&$1,2,2,4,4,4,4,4$&$F,F,F,F,F,F,F,F$\\
$63$&$20$&$1,2,1(2),6,6,2,10,4,2(2),2,$&$F,F,F,F,F,F,F,F,F,T$\\
&&$10,2,2,2(2),2,10,2,10,12,4$&$F,F,F,F,F,F,F,F,F,F$\\
$28$&$4$&$1(2),4,2,2$&$F,F,F,F$\\
$45$&$8$&$1,1(2),2,6,16,4,2,8$&$F,F,F,F,F,F,F,F$\\
$30$&$4$&$1,1(2),4,2$&$F,F,F,F$\\
$40$&$7$&$1,1(2),4,2(2),8,2,4$&$F,F,F,F,F,F,F$\\
$36$&$5$&$1,8,2,2(2),2$&$F,F,F,F,F$\\
$24$&$3$&$1,2,2$&$F,F,F$\\

\end{tabular*}
{\rule{\temptablewidth}{1pt}}
\end{center}
\end{table}

In the proof of Lemma \ref{reductionlemma}, we use a specialization lemma in Appendix of Katz \cite{Katz} and a theorem of Manin \cite{Manin} and Drinfeld \cite{Drinfeld}.

\begin{lem}[Specialization Lemma]\label{Katz} Let $K$ be a number field. Let $\wp\subset\mathcal{O}_K$ be a prime above $p$. Let $A/K$ be an abelian variety. Suppose the ramification index $e_\wp(K/\mathbb{Q})<p-1$. Then the reduction map
$$\Psi: A(K)_{tor}\longrightarrow A(\overline{\mathbb{F}}_p)$$
is injective.
\end{lem}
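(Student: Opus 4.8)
The plan is to reduce the injectivity of $\Psi$ to the \emph{torsion-freeness of the formal group attached to the N\'eron model of $A$}, and then to establish that torsion-freeness by means of the formal logarithm; it is in this last step that the hypothesis $e_\wp(K/\mathbb{Q})<p-1$ is used decisively.

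First I would pass to completions. Replacing $K$ by its $\wp$-adic completion $K_\wp$, with valuation ring $\mathcal{O}_\wp$, maximal ideal $\mathfrak{m}_\wp$ and residue field $k=\mathbb{F}_\wp$, changes neither $p$ nor $e_\wp$ nor $k$, and it can only enlarge the source of the reduction map; so it suffices to treat the complete local case. Let $\mathcal{A}/\mathcal{O}_\wp$ be the N\'eron model of $A$, so that $A(K_\wp)=\mathcal{A}(\mathcal{O}_\wp)$ by the N\'eron mapping property, and let $\widehat{\mathcal{A}}$ be the formal completion of the smooth group scheme $\mathcal{A}$ along its zero section, a commutative formal group of dimension $g=\dim A$ over $\mathcal{O}_\wp$. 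A standard fact (elementary for elliptic curves, and in general part of the theory of N\'eron models) identifies the kernel of reduction as
$$\ker\!\bigl(\mathcal{A}(\mathcal{O}_\wp)\longrightarrow\mathcal{A}(k)\bigr)=\widehat{\mathcal{A}}(\mathfrak{m}_\wp).$$
If $P\in A(K)_{tor}$ satisfies $\Psi(P)=0$, then $P$ reduces to the ($k$-rational) identity of $\mathcal{A}$, hence $P\in\widehat{\mathcal{A}}(\mathfrak{m}_\wp)$; so the claim follows once we know $\widehat{\mathcal{A}}(\mathfrak{m}_\wp)$ is torsion-free.

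For this I would invoke the formal logarithm $\log\colon\widehat{\mathcal{A}}\to\widehat{\mathbb{G}}_a^{\,g}$ over $K_\wp$: normalizing so that its linear part is the identity, its component power series have the form $T_i+(\text{higher-order terms whose denominators involve only the integers }n)$. The classical $p$-adic estimates (as in the one-dimensional case treated in Silverman's \emph{Arithmetic of Elliptic Curves}, Ch.~IV, applied coordinatewise) show that $\log$ and its compositional inverse $\exp$ converge and define mutually inverse group isomorphisms on the polydisc $\{x:v_\wp(x)>e_\wp/(p-1)\}$. Since $e_\wp<p-1$ forces $e_\wp/(p-1)<1$, this polydisc contains all of $\mathfrak{m}_\wp^{\,g}$; hence $\log$ restricts to an \emph{injective} group homomorphism $\widehat{\mathcal{A}}(\mathfrak{m}_\wp)\hookrightarrow(K_\wp,+)^{g}$. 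The target is torsion-free, being the additive group of a field of characteristic $0$, so $\widehat{\mathcal{A}}(\mathfrak{m}_\wp)$ is torsion-free, forcing $P=0$. Thus $\Psi$ is injective.

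The main obstacle is the convergence-and-inversion statement for the formal logarithm on the disc $v_\wp>e_\wp/(p-1)$: this is precisely where the numerical hypothesis $e_\wp<p-1$ is consumed, and it is the only genuinely $p$-adic-analytic input, since it is the exponential series that carries the dangerous denominators. The auxiliary identification of the kernel of reduction with $\widehat{\mathcal{A}}(\mathfrak{m}_\wp)$ is routine, but for $A$ with bad reduction it relies on the theory of N\'eron models rather than on an explicit Weierstrass computation; in the application to $J_1(N)$ (which has good reduction away from $N$) one may instead work directly with the smooth proper model.
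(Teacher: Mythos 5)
The paper does not actually prove this lemma: it is imported verbatim as a citation to the appendix of Katz \cite{Katz}, so there is no in-paper argument to compare against. Your proof is correct and is essentially the standard one (and close in spirit to Katz's own): reduce to the complete local case, identify the kernel of reduction $\mathcal{A}(\mathcal{O}_\wp)\to\mathcal{A}(k)$ with the $\mathfrak{m}_\wp$-points of the formal group of the N\'eron model, and show that group is torsion-free when $e_\wp<p-1$. Two small points worth flagging. First, your appeal to the formal logarithm ``coordinatewise'' in dimension $g>1$ is a little glib: the existence of a logarithm whose degree-$n$ part has denominators dividing $n$ (equivalently, integrating the integral invariant differentials) is a genuine, if standard, input from Honda--Hazewinkel-type theory, and the convergence radius $v_\wp>e_\wp/(p-1)$ for $\exp$ then follows as in the one-dimensional case. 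Second, there is a slightly more elementary route that avoids $\exp$ entirely: prime-to-$p$ torsion dies in the formal group because multiplication by $n$ is an automorphism of $\widehat{\mathcal{A}}(\mathfrak{m}_\wp)$ for $(n,p)=1$, and a point of exact order $p$ in the formal group is forced (by the shape of $[p]$) to satisfy $v_\wp(x)\leq e_\wp/(p-1)<1$, contradicting $x\in\mathfrak{m}_\wp$; this is the argument in Silverman IV.6.1/VII.3.2 and generalizes to abelian varieties. Either way, the hypothesis $e_\wp<p-1$ enters exactly where you say it does, and your reduction steps (completion, N\'eron mapping property, kernel of reduction) are all sound.
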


\begin{thm}[Manin-Drinfeld]\label{ManinDrinfeld} Let $C\subset SL_2(\mathbb{Z})/(\pm1)$ be a congruence subgroup. $x,y\in\mathbb{P}^1(\mathbb{Q})$ and $\overline{x},\overline{y}$ the images of $x$ and $y$ respectively, on $\overline{\mathbb{H}}/C$. Then the class of divisors $(\overline{x})-(\overline{y})$ on curve $\overline{\mathbb{H}}/C$ has finite order.
\end{thm}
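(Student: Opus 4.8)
The plan is to prove this (the Manin-Drinfeld theorem) by the Hecke operator argument: the cuspidal divisor class group is annihilated, after multiplication by a suitable positive integer, by an endomorphism of the Jacobian which is nevertheless an isogeny, the point being that the Hecke eigenvalues attached to the cusps are too large to occur among the Fourier coefficients of weight-$2$ cusp forms. Write $X=\overline{\mathbb{H}}/C$, $J=\mathrm{Jac}(X)$, and let $S\subset X$ be the finite set of cusps; fix $N$ with $\Gamma(N)\subseteq C$. Since $S$ is finite it is enough to prove that the class of the difference of any two cusps is torsion in $J(\mathbb{C})=\mathrm{Pic}^0(X)$. For a prime $\ell\nmid N$ the Hecke correspondence $T_\ell$ acts on $X$ and hence induces an endomorphism of $J$; since $T_\ell$ is built from pull-back and push-forward along morphisms of modular curves, which carry cusps to cusps, it also acts $\mathbb{Z}$-linearly on the free abelian group $\mathrm{Div}^0(S)$ of degree-zero divisors supported on $S$, and the map $\mathrm{Div}^0(S)\to J(\mathbb{C})$, $D\mapsto[D]$, intertwines these two actions.

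I would then fix a prime $\ell\nmid N$ with $\ell\ge 7$ and compare two spectra of $T_\ell$. On $H^0(X,\Omega^1)$, the space of weight-$2$ cusp forms for $C$, the eigenvalues of $T_\ell$ are algebraic numbers of absolute value $\le 2\sqrt{\ell}$; this follows from the Eichler--Shimura relation together with the Weil bound applied to the reduction mod $\ell$ (a prime of good reduction, by Igusa) of the relevant abelian varieties. On $\mathrm{Div}^0(S)\otimes\mathbb{Q}$, by contrast, $T_\ell$ acts with Eisenstein eigenvalues of the shape $\chi_1(\ell)+\ell\,\chi_2(\ell)$ with $\chi_i(\ell)$ roots of unity, hence of absolute value $\ge\ell-1$; I would establish this either by computing the weighted permutation action of $T_\ell$ on the cusp set $C\backslash\mathbb{P}^1(\mathbb{Q})$, or by identifying $\mathrm{Div}^0(S)\otimes\mathbb{Q}$ as a Hecke module with a space of weight-$2$ Eisenstein series. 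Since $\ell\ge 7$ forces $\ell-1>2\sqrt{\ell}$, the two spectra are disjoint.

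To conclude, let $Q\in\mathbb{Q}[t]$ be the characteristic polynomial of $T_\ell$ acting on $\mathrm{Div}^0(S)\otimes\mathbb{Q}$, and let $\widetilde{Q}=m\,Q\in\mathbb{Z}[t]$ after clearing denominators. By Cayley--Hamilton $\widetilde{Q}(T_\ell)$ kills $\mathrm{Div}^0(S)\otimes\mathbb{Q}$, and since $\mathrm{Div}^0(S)$ is torsion-free it follows that $\widetilde{Q}(T_\ell)D=0$ in $\mathrm{Div}^0(S)$ for every degree-zero cuspidal divisor $D$; applying the intertwining map gives $\widetilde{Q}(T_\ell)\cdot[D]=0$ in $J$, so $[D]$ lies in $\ker\bigl(\widetilde{Q}(T_\ell)\colon J\to J\bigr)$. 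But $\widetilde{Q}(T_\ell)$ is an isogeny, for its eigenvalues on $H^0(X,\Omega^1)$ are the numbers $\widetilde{Q}(a)=m\,Q(a)$ with $a$ ranging over the eigenvalues of $T_\ell$ on cusp forms, and none of these vanishes: such an $a$ satisfies $|a|\le 2\sqrt{\ell}<\ell-1$, whereas every root of $Q$ has absolute value $\ge\ell-1$. Hence $\ker\widetilde{Q}(T_\ell)$ is finite, $[D]$ is torsion, and the theorem follows.

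The step I expect to be the main obstacle is the assertion that $T_\ell$ acts on $\mathrm{Div}^0(S)\otimes\mathbb{Q}$ with Eisenstein (in particular ``large'') eigenvalues; proving it cleanly requires either a careful analysis of the Hecke action on the set of cusps, or the comparison of the cuspidal divisor group with weight-$2$ Eisenstein series. The remaining ingredients, namely the compatibility of the divisor-level and Jacobian-level Hecke actions and the bound $|a|\le 2\sqrt{\ell}$, are standard; and one may in any case reduce first to $X(N)$, or directly to the curves $X_1(N)$ and $X_0(N)$ relevant to this paper, where everything above is classical.
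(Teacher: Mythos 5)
The paper does not prove this statement: it is quoted as a classical theorem with references to the original papers of Manin and Drinfeld, so there is no in-paper proof to compare against. Your sketch is the standard Hecke-operator argument -- essentially Manin's original proof, and the one reproduced in most modern accounts -- and it is sound in outline: the intertwining of the $T_\ell$-actions on $\mathrm{Div}^0(S)$ and on $J$, the Weil bound $|a_\ell|\le 2\sqrt{\ell}$ on the cuspidal spectrum versus the Eisenstein eigenvalues of absolute value at least $\ell-1$ on the cusps, and the resulting isogeny $\widetilde{Q}(T_\ell)$ killing the cuspidal classes are exactly the right ingredients, and you have correctly flagged the Eisenstein eigenvalue computation (equivalently, the identification of $\mathrm{Div}^0(S)\otimes\mathbb{Q}$ with a space of weight-$2$ Eisenstein series, after reducing to $X(N)$ where the Hecke action on cusps is cleanly defined) as the one step needing genuine work.
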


\begin{lem} \label{reductionlemma}
Suppose$N>4$ such that  $\Gon(X_1(N))>d$, $J_1(N)(\mathbb{Q})$ is finite, $p>2$ is a prime not dividing $N$. Let $K$ be a number field of degree $d$ over $\mathbb{Q}$ and $\wp$ a prime of $K$ over $p$. Let $E_{/K}$ be an elliptic curve with a $K$-rational point $P$ of order $N$, i.e. $x=(E,\pm P)\in Y_1(N)(K)$. Then $E$ has good reduction at $\wp$.
\end{lem}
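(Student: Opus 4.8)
The plan is to argue by contradiction: suppose $E$ has bad reduction at $\wp$. After a quadratic twist does not affect torsion prime to $2$, and since $p>2$, we may replace $E$ by a quadratic twist so that $E$ acquires semistable reduction; more precisely, the relevant point is that $P$ has order $N>4$, so the reduction of $E$ cannot be additive in the naive sense, and I will exploit the point $\widetilde P$ on the identity component of the special fibre. First I would pass to the minimal regular model (or the N\'eron model) of $E$ over $\mathcal{O}_{K_\wp}$ and consider the reduction $\widetilde P$ of $P$ in $\widetilde E(k)$. If $\widetilde P$ landed in the identity component, then since $\gcd(N,p)=1$ Theorem \ref{torsionreduction} shows $\widetilde P$ still has order $N$, forcing $\widetilde E^0(k)$ to contain a point of order $N>4$; but $\widetilde E^0$ is either $\mathbb{G}_a$, $\mathbb{G}_m$, or a form of $\mathbb{G}_m$, whose prime-to-$p$ torsion over $k=\mathbb{F}_q$ is cyclic of order dividing $q-1$ (or trivial for $\mathbb{G}_a$), and by enlarging the residue field appropriately one sees this is incompatible — this handles the additive and split/nonsplit multiplicative cases. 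The subtle case is when $\widetilde P$ lands in a nontrivial component, i.e. the Tamagawa obstruction.

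To handle that, I would instead work on the modular curve side. The point $x=(E,\pm P)\in Y_1(N)(K)$ gives a $K$-rational point of $X_1(N)$, and bad reduction of $E$ at $\wp$ means (by the Serre–Milne theorem and the moduli interpretation over $\mathcal{O}_{K_\wp}$) that $x$ specializes to a cusp of $X_1(N)_{/k}$. Pick any cusp $c$ of $X_1(N)$ that is $\mathbb{Q}$-rational, e.g. the cusp $\infty$, and form the degree-$d$ effective divisor $D_x$ on $X_1(N)$ associated to $x$ together with $d\cdot\infty$; then consider the class $[D_x - d\cdot\infty] = \Phi(D_x) \in J_1(N)(K)$. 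The key observation is that this class is torsion: upon reduction modulo $\wp$, the divisor $D_x$ becomes supported on cusps, and by the Manin–Drinfeld theorem (Theorem \ref{ManinDrinfeld}) any difference of cuspidal divisors is torsion in $J_1(N)$, so the reduction of $\Phi(D_x)$ is torsion; since $p>2$ and $e_\wp(K/\mathbb{Q}) \le d \le p-2 < p-1$ — here I would impose, or note that we only apply the lemma for such $p$ — the Specialization Lemma (Lemma \ref{Katz}) says reduction is injective on $J_1(N)(K)_{tor}$, which combined with finiteness of $J_1(N)(\mathbb{Q})$ and a standard descent (the class lives in $J_1(N)(K)$, and after restricting scalars or using that its reduction is torsion of bounded order) forces $\Phi(D_x)$ itself to be a torsion point.

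Once $\Phi(D_x)$ is known to be torsion and $x$ is non-cuspidal, I would derive the contradiction from the hypothesis $\Gon(X_1(N)) > d$ via Lemma \ref{Frey}: the map $\Phi$ restricted to $X_1(N)^{(d)}(K)$ is injective, and a torsion class in the image of $\Phi$ that equals the class of a cuspidal divisor would force $D_x$ to be linearly equivalent to that cuspidal divisor. But a nonconstant function with divisor $D_x - (\text{cuspidal divisor of degree } d)$ would be a map $X_1(N) \to \mathbb{P}^1$ of degree $\le d$, contradicting $\Gon(X_1(N)) > d$ — unless $D_x$ equals the cuspidal divisor itself, which is impossible since $x$ is non-cuspidal. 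Hence $\Phi(D_x)$ cannot be torsion, the desired contradiction, so $E$ has good reduction at $\wp$.

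The main obstacle I anticipate is making the torsion-ness of $\Phi(D_x)$ fully rigorous: one must carefully track that the specialization of $x$ to the special fibre is genuinely a cusp (this needs the moduli interpretation of $X_1(N)$ over $\mathbb{Z}[1/N]$ and the compatibility of reduction with the moduli map, which is where the Igusa good-reduction and Serre–Milne inputs enter), and that the reduction map on $J_1(N)(K)_{tor}$ is injective requires the ramification hypothesis $e_\wp(K/\mathbb{Q}) < p-1$; for $d$-dimensional number fields with $d \le 3$ this holds as soon as $p \ge 5$, so the statement of the lemma as phrased with ``$p>2$'' likely needs either a mild restriction to $p \ge 5$ or a separate ad hoc argument at $p=3$ for cubic fields. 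I would also need the observation that a torsion point of $J_1(N)(K)$ whose reduction lands in the (finite, since $J_1(N)(\mathbb{Q})$ is finite and torsion is bounded uniformly by the Weil bounds on $|J_1(N)(\mathbb{F}_{q^?})|$) image is itself controlled — but this is standard once injectivity of specialization is in hand.
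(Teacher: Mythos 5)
Your overall skeleton (specialize to a cusp, use Manin--Drinfeld plus finiteness of $J_1(N)(\mathbb{Q})$ to get torsion classes, apply the Katz specialization lemma and then Frey's injectivity to force the noncuspidal divisor to equal a cuspidal one) is the same as the paper's, but there is a genuine gap at exactly the point you flag and then leave unresolved: the hypothesis $e_\wp(K/\mathbb{Q})<p-1$ of Lemma \ref{Katz}. You propose applying the specialization lemma to $J_1(N)(K)_{tor}$, note that for $p=3$ and $d=3$ one can have $e_\wp=3>p-1=2$, and suggest restricting to $p\geq5$ or inventing an ad hoc argument at $p=3$. That concession is fatal here: the lemma is stated for all $p>2$, and the paper applies it precisely with $p=3$ and $K$ cubic in Section 4 (indeed for $N=40$ one cannot simply switch to another small prime, since $5\mid 40$ and the Hasse bound is what makes $p=3$ useful). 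The paper's resolution, which your proposal is missing, is to never apply the specialization lemma over $K$ at all: the trace divisor $x_1+\cdots+x_d$ is $\mathbb{Q}$-rational, the cusps $c_1,\ldots,c_d$ are all defined over $\mathbb{Q}(\zeta_N)$ (Ogg), and $p$ is unramified in $\mathbb{Q}(\zeta_N)$ because $p\nmid N$. Hence both classes $[x_1+\cdots+x_d-d\infty]$ and $[c_1+\cdots+c_d-d\infty]$ lie in $J_1(N)(\mathbb{Q}(\zeta_N))_{tor}$ (the first by finiteness of $J_1(N)(\mathbb{Q})$, the second by Manin--Drinfeld), and Lemma \ref{Katz} applies there with $e_{\wp'}=1<p-1$ for every $p>2$. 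Note also that your comparison class $[C-d\infty]$ need not lie in $J_1(N)(K)$ in the first place, since the cusps hit by the reduction need not be $K$-rational; working over $\mathbb{Q}(\zeta_N)$ fixes this too.

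A secondary, more easily repaired issue: your treatment of additive reduction via quadratic twists and the identity component is muddled and leaves open the case where $\widetilde{P}$ lies in a nontrivial component, and the modular-curve argument does not rescue you there, because a point with additive (potentially good) reduction does \emph{not} specialize to a cusp. The clean argument is the paper's counting: for additive reduction $|\widetilde{E}(k)|=|\mathbb{G}_a(k)|\cdot|G|=p^f|G|$ with $|G|\leq4$, and Theorem \ref{torsionreduction} forces $N\mid p^f|G|$, which is impossible since $\gcd(N,p)=1$ and $N>4$. The cuspidal-specialization machinery is needed only for the multiplicative case, where the component group can be large.
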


\begin{proof} Suppose $E$ has additive reduction at $\wp$, then $\widetilde{E}(k)^0\cong\mathbb{G}_{a/k}$ with $|\mathbb{G}_{a/k}|=p^i, i\leq d$ and $\widetilde{E}(k)/\widetilde{E}(k)^0\cong G$ with $|G|\leq4$.
Since $\widetilde{P}$ is a $k$-rational point of order $N$ in $\widetilde{E}$, then $N$ divides $|\widetilde{E}(k)|=|\mathbb{G}_{a/k}||G|$, which is impossible under our assumption.

Suppose $E$ has multiplicative reduction at $\wp$, i.e. $x$ specializes to a cusp of $\widetilde{X}_1(N)$. Then $\tau_i(K)$ is also a cubic field with prime ideal $\tau_i(\wp)$ over $p$ and residue field $k_i=k$. And $\tau_i(E)$ also has multiplicative reduction at $\tau_i(\wp)$. This means all the images $x_1,\cdots, x_d$ of $x$ specialize to cusps of $\widetilde{X}_1(N)$. Let $c_1,\cdots, c_d$ be the cusps such that
$$x_i\otimes\overline{\mathbb{F}}_p=c_i\otimes\overline{\mathbb{F}}_p, ~~~~~1\leq i\leq d$$

We know all the cusps of $X_1(N)$ are defined over $\mathbb{Q}(\zeta_N)$ \cite{Ogg73}. Let $\wp'$ be a prime in $\mathbb{Q}(\zeta_N)$ over $p$. It is an elementary fact in algebraic number theory that $p$ ramifies in $\mathbb{Q}(\zeta_N)$ if and only if $p|N$, so $e_{\wp'}(\mathbb{Q}(\zeta_N)/\mathbb{Q})=1$ under our assumption $p\nmid N$. So by Lemma \ref{Katz}, the specialization map
$$\Psi: J_1(N)(\mathbb{Q}(\zeta_N))_{tor}\longrightarrow J_1(N)(\overline{\mathbb{F}}_p)$$
is injective.

Since $\Gon(X_1(N))>d$, then by Lemma \ref{Frey}, the map
$$\Phi: X_1(N)^{(d)}(\mathbb{Q}(\zeta_N))\longrightarrow J_1(N)(\mathbb{Q}(\zeta_N))$$
is injective.

We know $x_1+\cdots+x_d$ is $\mathbb{Q}$-rational and $J_1(N)(\mathbb{Q})$ is finite, so $[x_1+\cdots+x_d-d\infty]$  is in $J_1(N)(\mathbb{Q}(\zeta_N))_{tor}$. By Theorem \ref{ManinDrinfeld}, the difference of two cusps of $X_1(N)$ has finite order in $J_1(N)$. So $[c_1+\cdots+c_d-d\infty]$ is also in $J_1(N)(\mathbb{Q}(\zeta_N))_{tor}$. Therefore $\Psi\circ\Phi(x_1+\cdots+x_d)=\Psi\circ\Phi(c_1+\cdots+c_d)$ implies $x_1+\cdots+x_d=c_1+\cdots+c_d$ since $\Psi\circ\Phi$ is injective. This is a contradiction because we assume $x$ is a noncuspidal point.

Therefore $E$ has good reduction at $\wp$.
\end{proof}

\section{Proof of Theorem \ref{T1}}\label{proof}

\subsection{$N=49,40$}
As is seen in Table \ref{jacobian}, $J_1(N)(\mathbb{Q})$ is finite. By Proposition \ref{IshiiMomose} and \ref{JeonKimSchweizer}, we know $Gon(X_1(N))>3$. Let $K$ be a cubic field and $\wp$ a prime of $K$ over $3$. Suppose $x=(E,\pm P)\in Y_1(N)(K)$. Therefore by Lemma \ref{reductionlemma}, $E$ has good reduction at $\wp$. By Theorem \ref{torsionreduction}, the reduction $\widetilde{P}$ of $P$ is a $k$-rational point of order $N$ in the elliptic curve $\widetilde{E}$ over $k=\mathcal{O}_K/\wp$.

But $\widetilde{E}(k)$ can not have a point of order $N$ since $N>(1+\sqrt{3^3})^2\approx38.4$. This is a contradiction. So $\mathbb{Z}/N\mathbb{Z}$ is not a subgroup of $E(K)_{tor}$.

\subsection{$N=25,22$}
As is seen in Table \ref{jacobian}, $J_1(N)(\mathbb{Q})$ is finite. By Proposition \ref{IshiiMomose} and \ref{JeonKimSchweizer}, we know $Gon(X_1(N))>3$. Let $K$ be a cubic field and $\wp$ a prime of $K$ over $3$. Suppose $x=(E,\pm P)\in Y_1(N)(K)$. Therefore by Lemma \ref{reductionlemma}, $E$ has good reduction at $\wp$. By Theorem \ref{torsionreduction}, the reduction $\widetilde{P}$ of $P$ is a $k$-rational point of order $N$ in the elliptic curve $\widetilde{E}$ over $k=\mathcal{O}_K/\wp$.

If $k=\mathbb{F}_3$ or $\mathbb{F}_{3^2}$, then $\widetilde{E}(k)$ can not have a point of order $N$ since $N>(1+\sqrt{3^2})^2$. If $k=\mathbb{F}_{3^3}$, suppose $\widetilde{E}(k)$ has a point of order $N$, then $\widetilde{E}(k)\cong\mathbb{Z}/N\mathbb{Z}$ since $Nm>(1+\sqrt{3^3})^2$ for any $m>1$. But by Theorem \ref{Waterhouse}, $|\widetilde{E}(k)|\neq N$ ($t=3$ for $N=25$, $t=6$ for $N=22$). This is a contradiction. So $\mathbb{Z}/N\mathbb{Z}$ is not a subgroup of $E(K)_{tor}$.

\section*{}
\subsection*{Acknowledgements}
We thank Sheldon Kamienny for providing many valuable ideas and insightful comments.

\end{document}